\documentclass[11pt]{article}
\usepackage[letterpaper,margin=1in]{geometry}
\usepackage[T1]{fontenc}
\usepackage[utf8]{inputenc}
\usepackage{amsmath,amssymb,amsthm}
\usepackage{newtxtext,newtxmath}
\usepackage{microtype}
\usepackage{aliascnt}
\usepackage{hyperref}
\usepackage[nameinlink,capitalize,noabbrev]{cleveref}
\hypersetup{colorlinks=true,linkcolor=black,citecolor=black,urlcolor=black,
 pdftitle={},
 pdfsubject={},
 pdfauthor={}}

\newtheorem{theorem}{Theorem}[section]
\newaliascnt{lemma}{theorem}
\newtheorem{lemma}[lemma]{Lemma}
\aliascntresetthe{lemma}
\newaliascnt{corollary}{theorem}
\newtheorem{corollary}[corollary]{Corollary}
\aliascntresetthe{corollary}
\crefname{theorem}{Theorem}{Theorems}
\crefname{lemma}{Lemma}{Lemmas}
\crefname{corollary}{Corollary}{Corollaries}
\newcommand{\logp}{\log^{+}}
\newcommand{\I}{{\mathcal I}}
\newcommand{\J}{{\mathcal J}}
\newcommand{\E}{{\mathbb E}}

\numberwithin{equation}{section}
\title{Beyond Halfway to Hadwiger’s Conjecture}
\author{Chun-Hung Liu\thanks{chliu@tamu.edu. Department of Mathematics, Texas A\&M University, USA. Partially supported by NSF under CAREER award DMS-2144042.}
\and 
Jason Luo\thanks{luojason@mit.edu. Massachusetts Institute of Technology, USA.}}
\date{}

\begin{document}
\maketitle
\vspace{-1.7em}

\begin{abstract}
Hadwiger conjectured in 1943 that every graph with no $K_t$ minor has chromatic number at most $t-1$.
Delcourt and Postle proved that every graph with no $K_t$ minor has chromatic number $O(t\log\log t)$. 
We build on their result to improve this bound to $O(t\log\log\log t)$. 
\end{abstract}

\section{Introduction}\label{sec:introduction}

The chromatic number of a graph\footnote{All graphs are finite, non-null, and simple in this paper, unless otherwise specified.} $G$, denoted by $\chi(G)$, is the minimum $k$ such that $V(G)$ can be covered by $k$ stable sets in $G$, where a {\it stable set} in $G$ is a set of pairwise non-adjacent vertices.
Bounding the chromatic number is a central topic in graph theory.
For example, the Four Color Theorem \cite{AH77,AHK77,IKMMTT,RSST97} states that every planar graph has chromatic number at most $4$.

Hadwiger \cite{Had43} proposed a far-reaching generalization of the Four Color Theorem, stating that for every positive integer $t$, every $K_{t+1}$-minor-free graph has chromatic number at most $t$.
A graph $H$ is a {\it minor} of a graph $G$ if some graph isomorphic to $H$ can be obtained from a subgraph of $G$ by contracting edges.
Wagner proved that the $t=4$ case is equivalent to the Four Color Theorem \cite{W37}.
Robertson, Seymour, and Thomas \cite{RST93} further proved the $t=5$ case.
Hadwiger's conjecture remains open for $t \geq 6$.

Kostochka~\cite{Kos82,Kos84} and Thomason~\cite{Tho84} independently proved that every graph with no $K_t$-minor is $O(t\sqrt{\log t})$-degenerate and hence has chromatic number $O(t\sqrt{\log t})$.
The degeneracy bound proved by Kostochka and Thomason is optimal \cite{Kos82,Kos84,F83}, and the resulting bound on the chromatic number remained the best known for decades.
Breaking the barrier for this degeneracy bound, Norin, Postle, and Song~\cite{NPS23} (essentially) halved the exponent of the logarithm term\footnote{\cite{NPS23} is a combination of two earlier drafts on arXiv \cite{NS19,P19}. The degeneracy barrier was first broken by Norin and Song in \cite{NS19}, and Postle improved the result in \cite{P19}. The title of this paper was inspired by the title of \cite{P19}.}, proving that every $K_t$-minor-free graph has chromatic number $O(t(\log t)^\beta)$ for every fixed $\beta>1/4$. 
Delcourt and Postle~\cite{DP25} further improved the bound to $O(t\log\log t)$. 
The main result of this paper further improves this bound to $O(t\log\log\log t)$.

\begin{theorem}\label{thm:global}
There is an absolute constant $C$ such that, for every integer $t\geq 16$ and every $K_t$-minor-free graph $G$,
\[
\chi(G)\leq Ct\log\log\log t.
\]
\end{theorem}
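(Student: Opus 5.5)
We follow the Delcourt--Postle framework, which reduces bounding $\chi$ of $K_t$-minor-free graphs to two ingredients, and replace one of them with a stronger version. The first ingredient is a \emph{density increment} (Norin--Postle--Song type) result. It says that a $K_t$-minor-free graph $G$ that is a minimal counterexample to $\chi(G)\le k$, with $k = Ct\log\log\log t$, is $k$-contraction-critical. Hence $G$ has minimum degree at least $k$ and is highly connected. It also contains a small dense subgraph $H$ on $O(t\,\mathrm{polylog}\, t)$ vertices with density at least proportional to $k$; otherwise a $K_t$ minor is obtained directly from the Kostochka--Thomason degeneracy bound together with linkage arguments.

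The second ingredient is a \emph{minor-building from small dense graphs} lemma. It says that if a graph has $n$ vertices and average degree $d$, then it contains a $K_t$ minor whenever $d \ge C' t\, f(n/t)$, where $f$ grows slowly. Delcourt and Postle obtain $f(x)=\log\log x$ (or, in an iterated form, $\log x$ applied twice after a first reduction of $n/t$ to $\mathrm{polylog}\, t$). Thus $n/t=\mathrm{polylog}\, t$ gives $\log\log t$.

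Our plan is to iterate the reduction once more. \textbf{Step 1.} Use the Delcourt--Postle density increment to find $H$ with $|V(H)|\le t(\log t)^{O(1)}$ and average degree $\ge k$. \textbf{Step 2.} Inside $H$, apply a second density increment, again via contraction-criticality and the connectivity of $G$. This means either finding a $K_t$ minor directly, or finding a subgraph $H'$ with $|V(H')|\le t(\log\log t)^{O(1)}$ and average degree still $\Omega(k)$. The tool here is a refined version of the Delcourt--Postle "small dense subgraph" lemma. We rerun their argument with the ratio $n/t$ as the parameter instead of $t$ itself, so that the loss per round is a logarithm of the current ratio. \textbf{Step 3.} Apply the Thomason-type bound that a graph with $n$ vertices, average degree $d$ and $n/t = r$ has a $K_t$ minor once $d\ge C t\log\log r$. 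With $r=(\log\log t)^{O(1)}$ this requires only $d \ge C t\log\log\log t$. \textbf{Step 4.} Glue minors. Each step must produce not merely a minor inside $H'$ but a $K_t$ minor in $G$. We therefore need the rooted or linked version: connectivity of $G$, guaranteed by criticality, lets us link the $O(t)$ branch sets found in different dense pieces. For this we invoke the linkage/well-linked-set machinery used by Norin--Postle--Song.

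The main obstacle is Step 2. The second density increment must lose only a constant factor in density while shrinking the ratio from $\mathrm{polylog}\,t$ to $\mathrm{polylog}\log t$. The Delcourt--Postle argument pays a factor depending on $\log$ of the ratio in its "forcing small dense pieces" step. First we would try to make every estimate depend only on the ratio, e.g.\ using random sampling plus a Kővári--Sós--Turán-style counting of sparse cuts. If that fails, we would try to absorb the loss into the constant $C$, using the fact that the target bound already carries a $\log\log\log t$ factor. For $16\le t\le t_0$ the statement follows trivially from the Kostochka--Thomason bound by enlarging $C$.
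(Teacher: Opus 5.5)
Your plan has a genuine gap. Step 3 is false, and this reflects a misreading of where the $\log\log t$ in Delcourt--Postle comes from.

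\textbf{Why Step 3 fails.} By Bollob\'as--Catlin--Erd\H{o}s, $G(N,1/2)$ has Hadwiger number $(1+o(1))N/\sqrt{\log_2 N}$. Take $N=rt$ with $r=(\log\log t)^2$. For large $t$ this random graph is $K_t$-minor-free, yet its average degree is about $rt/2$, far above $Ct\log\log\log t$. More generally, any density threshold on $rt$ vertices must be at least about $rt/2$ for all $r$ up to a small constant times $\sqrt{\log t}$. So a minimal counterexample may contain a dense small $H'$ of exactly the kind Step 2 produces, and density gives no contradiction.

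\textbf{Further problems.} Your accounting is also inconsistent. First, $\log\log\bigl((\log\log t)^{O(1)}\bigr)=O(\log\log\log\log t)$, not $\log\log\log t$. Second, if a threshold $Ct\log\log(n/t)$ were true, you could apply it directly at $n/t=\log^4 t$, making Step 2 superfluous. Step 2 itself is an unproved and much stronger form of the Delcourt--Postle reduction, and you give no argument for it. Its loss depends on the logarithm of the ratio, which is unbounded, so it cannot be absorbed into $C$. Step 4 is unnecessary. Theorem~1.6 of Delcourt--Postle is a black box: $\chi(G)\le C_0t(1+f(G,t))$, where $f$ measures the \emph{chromatic number} of $K_a$-minor-free subgraphs on at most $C_0a\log^4a$ vertices. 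Their $\log\log t$ comes from coloring these small graphs by repeatedly deleting stable sets of size $\Omega(n/a)$, which costs $O(a\log(n/a))$ colors.

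\textbf{The missing idea.} Keep that reduction unchanged and improve the coloring of the small graphs from $O(a\log(n/a))$ to $O(a\log\log(n/a))$. The paper does this as follows.
\begin{enumerate}
\item Set $r=2(a-1)$. By Reed--Seymour, every minor of $H$ has $\chi_f\le r$, hence $\alpha\ge|V|/r$.
\item Reduce the degeneracy by contracting stars. Let $\lambda=1+\log(n/r)$ and $d=\lceil r\lambda\rceil$. In the current $d$-core $A$, take a stable set $\{v_1,v_2,\dots\}$ of size at least $|V(A)|/r$. Greedily choose pairwise disjoint stable sets $L_i\subseteq N_A(v_i)$. Contracting each star $\{v_i\}\cup L_i$ costs one color and removes at least $d|V(A)|/(r^2+|V(A)|)$ vertices. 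Since $n\le r^2$, at most $2r+1$ rounds give a $(d-1)$-degenerate minor $J$.
\item Round a fractional coloring of $J$. Split its stable sets into pieces of size at most $m=\lceil n/r\rceil$; this gives a distribution with $\Pr(v\in I)=1/(2r)$ for every $v$. Sample $k=O\bigl(r(1+\log\lambda)\bigr)$ such sets independently. Each back-neighborhood in the degeneracy order then keeps at most $r$ vertices in expectation.
\item Apply Azuma and a union bound. This is valid because $m$ is polylogarithmic, so $r^2\ge 16km^2\log n$. With positive probability every back-neighborhood keeps fewer than $2r$ vertices, so the remaining graph is $(2r-1)$-degenerate.
\end{enumerate}
Altogether $\chi(H)\le 8r(1+\log\lambda)=O(a\log\log\log a)$, and Theorem~1.6 finishes the proof.
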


\cref{thm:global} can be strengthened to odd $K_t$-minor-free graphs.
By {\it contracting an edge-cut} in a graph $G$, we mean that we first find a partition $\{A,B\}$ of $V(G)$ and then contract all edges with one end in $A$ and one end in $B$.
We say that a graph $H$ is an {\it odd minor} of $G$ if some graph isomorphic to $H$ can be obtained from a subgraph of $G$ by repeatedly contracting edge-cuts.
Clearly, every odd minor of $G$ is a minor of $G$.
Gerards and Seymour (see \cite{JT95}) conjectured that every odd $K_{t+1}$-minor-free graph has chromatic number at most $t$, which is a strengthening of Hadwiger's conjecture.
The $t=2$ case is simple since bipartite graphs are exactly odd $K_3$-minor-free graphs.
The $t=3$ case of this conjecture was proved by Catlin \cite{C79}, and Guenin announced a proof for the $t=4$ case (see \cite{S16}).
But this conjecture was disproved for sufficiently large $t$ by K\"{u}hn, Sauermann, Steiner, and Wigderson \cite{KSSW25}; they showed that odd $K_t$-minor-free graphs can have chromatic number at least $(1.5-o(1))t$.
On the other hand, Steiner \cite[Theorem 2]{S22} proved that if every $K_t$-minor-free graph has chromatic number at most $f(t)$, then every odd $K_t$-minor-free graph has chromatic number at most $2f(t)$.
Therefore, \cref{thm:global} immediately implies the following corollary.

\begin{corollary}
There is an absolute constant $C$ such that, for every integer $t\geq 16$ and every odd $K_t$-minor-free graph $G$,
\[
\chi(G)\leq Ct\log\log\log t.
\]
\end{corollary}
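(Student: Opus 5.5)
The corollary follows directly from \cref{thm:global} together with Steiner's reduction \cite[Theorem 2]{S22}.

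Let $C_0$ be the absolute constant from \cref{thm:global}. Define $f(t)=C_0 t\log\log\log t$ for integers $t\ge 16$. For $t\ge 16$ we have $\log\log\log t>0$, so $f(t)$ is a positive real number. By \cref{thm:global}, every $K_t$-minor-free graph has chromatic number at most $f(t)$ for every $t\ge 16$.

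Steiner's theorem states: if every $K_t$-minor-free graph has chromatic number at most $f(t)$, then every odd $K_t$-minor-free graph has chromatic number at most $2f(t)$. Apply it to each fixed $t\ge 16$ with the value $f(t)$ above. If the theorem is stated only for integer-valued $f$, use $\lfloor f(t)\rfloor$ instead; this is still a valid bound because chromatic numbers are integers. We obtain that every odd $K_t$-minor-free graph has chromatic number at most $2C_0 t\log\log\log t$. Hence the corollary holds with $C=2C_0$.

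One point needs checking. Steiner's hypothesis must be required only for the single value $t$ in question, not for all $t$ simultaneously. Otherwise small $t$, where $\log\log\log t$ is not positive, would cause trouble. Steiner's reduction is proved for each fixed $t$, by partitioning an odd $K_t$-minor-free graph into parity classes relative to a spanning forest and showing the pieces are $K_t$-minor-free. So the hypothesis is indeed needed only at that $t$, and restricting to $t\ge 16$ causes no difficulty.

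If the reduction were only available in a uniform form, the fallback would be to define $f(t)=C_0 t\log\log\log t$ for $t\ge 16$ and $f(t)=t-1$ for small $t$. For those small values the bound $t-1$ follows from degeneracy: for fixed $t<16$, $K_t$-minor-free graphs have bounded degeneracy. If this crude bound is not exactly $t-1$, the constant can be enlarged to absorb it. The bound in the corollary is only claimed for $t\ge 16$, so this patch does not change the conclusion.
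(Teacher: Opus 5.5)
Your proposal is correct and is exactly the paper's argument: plug the bound $C_0t\log\log\log t$ from \cref{thm:global} (with your $C_0$) into Steiner's reduction \cite[Theorem 2]{S22} at each fixed $t\ge 16$, which gives the corollary with $C=2C_0$. Your last two paragraphs are unnecessary, and you should not rely on your sketch of Steiner's proof or on the claim that $t-1$ follows from degeneracy for small $t$; in any formulation of Steiner's theorem you can take $f(t)$ to be the exact maximum of $\chi$ over $K_t$-minor-free graphs, which is finite for every $t$ and at most $C_0t\log\log\log t$ when $t\ge 16$.
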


\section{Proof overview}

We first introduce notation that will be used in this paper.
Let $G$ be a graph.
Then $\alpha(G)$ denotes the size of the largest stable set in $G$, and $\delta(G)$ denotes the minimum degree of $G$.
For every vertex $v$ of $G$, we denote the (open) neighborhood $\{u \in V(G): uv \in E(G)\}$ of $v$ by $N_G(v)$.
For a subset $S \subseteq V(G)$, we denote the subgraph of $G$ induced by $S$ by $G[S]$.
In this paper, all logarithms are natural, and we define $\logp x=\max\{0,\log x\}$ for $x>0$.

A key ingredient in our proof of \cref{thm:global} is the following result of Delcourt and Postle~\cite{DP25} that reduces the problem of coloring $K_t$-minor-free graphs to coloring graphs on $O(t\log^4 t)$ vertices.

\begin{theorem}[{{\cite[Theorem~1.6]{DP25}}}]
\label{thm:dp}
There is an integer $C_0\ge1$ such that the following holds.
Let $t \geq 3$ be an integer and $G$ a graph. 
Let
$$ f(G,t)=\max_{H \subseteq G}\left\{
 \frac{\chi(H)}a:
 \begin{array}{l}
 \quad a\text{ an integer},\quad t/\sqrt{\log t}\le a\le t,\\
 |V(H)|\le C_0a\log^4a,\quad H\text{ is }K_a\text{-minor-free}
 \end{array}\right\}.$$
If $G$ is $K_t$-minor-free, then
\[
 \chi(G)\le C_0t\bigl(1+f(G,t)\bigr).
\]
\end{theorem}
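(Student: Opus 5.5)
The plan is to argue by a minimal counterexample, combining the density-increment method of Norin, Postle and Song with a linkage argument that glues small clique minors together. Suppose $G$ is $K_t$-minor-free with $\chi(G)>k:=C_0t(1+f(G,t))$. Replace $G$ by a $k$-contraction-critical minor, which we may take to be a subgraph-minimal one. Because $f$ is monotone under taking subgraphs, and we only ever use subgraphs of the original $G$, the function $f$ does not increase. Contraction-criticality gives what we need for the rest of the proof: $\delta(G)\ge k$, and by Dirac-type and Mader-type arguments every vertex neighborhood is highly connected in a suitable sense. It also gives the basic "non-colorability" principle: no proper subgraph, and no contraction, of $G$ is a counterexample.

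The first main step is to extract small dense pieces. Since $G$ has no $K_t$ minor, Kostochka--Thomason bounds the edge density of every minor of $G$ by $O(t\sqrt{\log t})$. Our minimum degree is at least $C_0t$, which is larger than $t$ by only a constant factor, so we would run the density-increment lemma of Norin--Postle--Song in the refined form of Delcourt--Postle. Starting from $G$, we repeatedly either pass to a minor whose density relative to its order has increased, or find an induced subgraph $H$ of order at most $C_0a\log^4 a$ that is dense and contains many vertices of large degree, for some $a$ with $t/\sqrt{\log t}\le a\le t$. The $\log^4$ accounts for the number of increment rounds needed before the density saturates at the Kostochka--Thomason ceiling, since the order shrinks polylogarithmically per round. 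We then use the definition of $f$ as follows. If such an $H$ were $K_a$-minor-free, then $\chi(H)\le f(G,t)\,a$. We would show this is incompatible with contraction-criticality: roughly, an $f a$-coloring of $H$, together with a $(k-fa)$-coloring of $G-V(H)$ extended through the high connectivity, would give a $k$-coloring of $G$. So each small piece $H$ contains a $K_a$ minor.

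The second step is to glue. Repeating the first step in $G$ minus the previously found pieces (which is possible because the pieces are small compared with $\delta(G)$), we obtain about $t/a\le\sqrt{\log t}$ vertex-disjoint $K_a$ minors. We then use the connectivity of a contraction-critical graph, via a linkage theorem for highly connected graphs (Bollobás--Thomason style, needing connectivity linear in the number of terminals), to join corresponding branch sets across pieces. This merges them into a $K_t$ minor, which is a contradiction.

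I expect the main obstacle to be the first step's use of $f$, which is the interface between coloring the small piece and contraction-criticality of $G$. A coloring of a small subgraph does not directly extend, so one needs a careful argument, such as a Kempe-chain or "list colouring the boundary" argument. It must show that a $K_a$-minor-free piece of order $O(a\log^4 a)$ with $\chi\le fa$ can be removed at a cost of only $O(fa)$ colors. I would first try to make $H$ have few boundary vertices relative to $\delta(G)$, so that a greedy extension with $k-\chi(H)$ spare colors succeeds.
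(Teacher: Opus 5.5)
There is a genuine gap. Note first that the paper does not prove this statement; it quotes it from Delcourt--Postle \cite{DP25}, so your sketch has to stand on its own.

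\textbf{The set-up.} $f(G,t)$ is a maximum over \emph{subgraphs} of $G$. It is therefore monotone under deletion but not under contraction. A small subgraph of a minor $G'$ of $G$ is only a minor of $G$, possibly with very large branch sets, and its chromatic number is not bounded by $f(G,t)a$.
\begin{itemize}
\item Passing to a $k$-contraction-critical minor can make $f$ jump. Your remark that ``we only ever use subgraphs of the original $G$'' contradicts the very step it accompanies.
\item The same problem recurs in the density-increment step. It produces small dense subgraphs of \emph{minors} of $G$, about which $f(G,t)$ says nothing.
\item If you instead stay inside subgraphs (say a vertex-critical subgraph), you lose the high vertex connectivity your linkage step needs. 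Vertex-critical graphs can have connectivity $2$ (Haj\'os sums).
\end{itemize}
A repaired argument must get its connectivity from something that survives passing to subgraphs, for instance a result giving a highly connected subgraph whose chromatic number is still large. It must also find its small pieces as subgraphs of $G$ itself.

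\textbf{The core step.} The step you flag as the main obstacle is really the heart of the theorem, and your proposed mechanism does not work. Criticality gives no $(k-fa)$-colouring of $G-V(H)$. From $\chi(G)\le\chi(G-V(H))+\chi(H)$ you only get $\chi(G-V(H))\ge k+1-fa$, which is the wrong direction. So a small $K_a$-minor-free piece of small chromatic number is in no tension with criticality, and nothing in the sketch shows that the pieces carry $K_a$ minors.

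\textbf{The gluing step.} This also fails as described.
\begin{itemize}
\item Pieces may have order as large as $C_0a\log^4a$, which can be of order $t\log^{3.5}t$. By Kostochka--Thomason, $\delta(G)=O(t\sqrt{\log t})$. So ``delete the pieces and repeat'' is unjustified.
\item More seriously, linking corresponding branch sets of $t/a$ disjoint $K_a$ minors gives a $K_a$ minor with larger branch sets, not a $K_t$ minor.
\item For $K_t$, all $\binom t2$ pairs of final branch sets must be adjacent. Your sketch supplies nothing for the $\binom t2-(t/a)\binom a2$ pairs whose branch sets lie in different pieces.
\item If these adjacencies are to come from inside the pieces, you need at least about $\binom t2/\binom a2\approx(t/a)^2$ pieces; with $a\ge t/\sqrt{\log t}$ that is at most about $\log t$.
\item Each final branch set must then thread through several pieces, and each piece's $K_a$ minor must be rooted at the ends of the linking paths.
\end{itemize}
None of this follows from merely having disjoint $K_a$ minors in a highly connected graph.
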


\cref{thm:dp} allows us to focus on small graphs so that upper bounds on the chromatic number in terms of the number of vertices are effective.
One way to obtain such an upper bound is to repeatedly peel off stable sets of linear size.
Such a stable set exists, due to a result of Duchet and Meyniel~\cite{DM82}.
However, to obtain our stronger bound, we need to exploit stable sets more uniformly rather than one at a time. Because of this, we use a stronger result of Reed and Seymour~\cite{RS98} on fractional chromatic number.

Let $G$ be a graph.
Let $\I(G)$ be the set of all stable sets in $G$.
A {\it fractional coloring} of $G$ is a function $f$ that assigns each stable set in $G$ a nonnegative real number such that $\sum_{I \ni v, I \in \I(G)} f(I) = 1$ for every $v \in V(G)$; the {\it weight} of a fractional coloring is $\sum_{I \in \I(G)} f(I)$.
It is not hard to see that if there exists a fractional coloring of $G$ of weight $r$, then for any function that assigns weights to vertices of $G$, there exists a stable set whose total weight is at least $1/r$ times the total weight on $V(G)$.
This intuitively suggests that a fractional coloring covers the vertices of a graph by using stable sets uniformly.
The {\it fractional chromatic number of $G$}, denoted by $\chi_f(G)$, is the minimum weight of a fractional coloring.

\begin{theorem}[{{\cite[Theorem~(1.3)]{RS98}}}]\label{thm:rs}
For every integer $t\ge2$, if $G$ is a $K_t$-minor-free graph, then $$\chi_f(G)\le2(t-1).$$
\end{theorem}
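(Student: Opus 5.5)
By LP duality, $\chi_f(G)$ equals the maximum of $w(V(G))/\max_{I\in\I(G)}w(I)$ over all nonnegative weight functions $w$ on $V(G)$. So it suffices to prove the following weighted statement: for every $K_t$-minor-free graph $G$ and every $w\colon V(G)\to\mathbb{R}_{\ge0}$, there is a stable set $I$ with $2(t-1)\,w(I)\ge w(V(G))$. By continuity and scaling we may assume $w$ is integer-valued. We cannot simply blow vertices up into cliques or stable sets, since that can destroy $K_t$-minor-freeness, so $w$ has to be carried through the argument directly. I would prove the weighted statement by induction on $t$, with a secondary induction on $|V(G)|$. The case $t=2$ is trivial: $G$ has no edges, so $V(G)$ itself is stable. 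Disconnected graphs are handled componentwise by taking unions of stable sets.

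The plan follows the Duchet--Meyniel peeling strategy, but made weighted. Assume $G$ is connected. The aim is a connected set $X\subseteq V(G)$ and a stable set $S\subseteq X$ with two properties: every vertex of $X$ lies in $S$ or has a neighbour in $S$, and
\[
w(X)\le 2\,w(S) .
\]
Given such a pair, put $G'=G-X$. Contracting the connected set $X$ to a single vertex shows that each component of $G'$ all of whose vertices have a neighbour in $X$ is $K_{t-1}$-minor-free. Let $Y$ be the set of vertices of $G'$ adjacent to $X$. There are two natural options:
\begin{itemize}
\item apply induction on $t$ to $G[Y]$, or
\item apply induction on $|V(G)|$ to $G-X-Y$, and combine the resulting stable set with $S$, which is legitimate because no vertex of $G-X-Y$ is adjacent to $S\subseteq X$.
\end{itemize}
The stable set we output is the heavier of the two options. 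Averaging the two inequalities with suitable coefficients should give the factor $2(t-1)$. The $2$ comes from the ratio $w(X)\le 2w(S)$, and the $t-1$ comes from the induction on $t$.

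To construct $X$ and $S$, I would grow them greedily. Start with $S=\{v\}$ and $X=\{v\}$ for a vertex $v$ of maximum weight. At each step, look for a vertex $u\notin N[X]$ that has a neighbour $x\in N(X)$ and satisfies $w(u)\ge w(x)$. Add $u$ to $S$ and add both $x$ and $u$ to $X$. This keeps $X$ connected, keeps $S$ stable, and keeps $w(X)\le 2w(S)$, since each step adds weight at most $2w(u)$ to $X$ and weight $w(u)$ to $S$. Stop when no such $u$ exists. The stopping condition then says that every vertex outside $N[X]$ that is adjacent to $N(X)$ is lighter than each of its neighbours in $N(X)$. I would feed this domination-by-weight information into the combination step, charging the weight of $Y$ against heavier neighbours.

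The hard part is exactly this combination step. In the unweighted Duchet--Meyniel argument, all the neighbours $Y$ of $X$ are simply deleted at no cost. In the weighted setting, $w(Y)$ may be large and must be accounted for, either through the induction on $t$ applied to $G[Y]$ or through the greedy stopping rule, and the bookkeeping must close with constant exactly $2(t-1)$. If the direct averaging does not close, my first fallback is to replace the induction hypothesis by a stronger probabilistic one. The stronger form would assert a probability distribution on stable sets in which each vertex $v$ is included with probability at least $1/(2(t-1))$, which is equivalent by duality. Distributions for $G[Y]$ and for $G-X-Y$ can then be built by induction and glued along $X$ using a random choice between them.
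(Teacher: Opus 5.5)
\paragraph{Verdict: genuine gap.} The paper gives no proof of this statement; it is quoted from Reed and Seymour. Your proposal therefore has to stand on its own, and it does not. The combination step you defer is the whole difficulty, and with the two options you list it cannot be closed. Write $A=w(X)$, $B=w(Y)$, $Z=V(G)\setminus(X\cup Y)$ and $C=w(Z)$. Induction guarantees only $B/(2(t-2))$ for the first option and $w(S)+C/(2(t-1))$ for the second. Vertices of $Z$ get nothing from the first option. So any convex combination putting weight $\lambda>0$ on it gives $C$ the coefficient $(1-\lambda)/(2(t-1))$, which is too small. With $\lambda=0$ you need $B\le 2(t-1)w(S)-A$. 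Whenever $2(t-1)w(S)-A<B<(t-2)(A+C)$, both guaranteed bounds lie strictly below $(A+B+C)/(2(t-1))$; for instance $t=10$, $A=w(S)=1$, $B=20$, $C=100$. Your probabilistic fallback is, as you say, LP-equivalent to the weighted statement. A random choice between the two distributions covers each vertex of $Z$ with probability only $(1-\lambda)/(2(t-1))$, so it hits the same deficit.

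\paragraph{Why the undominated part cannot be avoided.} When $Z=\emptyset$ your bookkeeping does close, since $\frac{A+B}{2(t-1)}\le \frac{1}{t-1}w(S)+\frac{t-2}{t-1}\cdot\frac{B}{2(t-2)}$. This is exactly why Duchet--Meyniel works: the unweighted greedy ends with a dominating $X$ and $|X|\le 2|S|-1$. With weights, domination at ratio $2$ is impossible in general. Take a triangle $abc$ of weight $1$ with a pendant vertex of weight $\varepsilon\in(0,1/4)$ at each of $a,b,c$. Every connected dominating set contains $a,b,c$, so $w(X)\ge 3>2+4\varepsilon\ge 2w(S)$ for every stable $S\subseteq X$. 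Hence every admissible $X$ leaves $C>0$.

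Your stopping rule does not control $C$. It only says that a vertex of $Z$ adjacent to $Y$ is lighter than its $Y$-neighbours. A single $Y$-vertex may have arbitrarily many slightly lighter neighbours in $Z$, and the rule says nothing about vertices of $Z$ at distance at least $3$ from $X$. What is missing is a new ingredient that accounts for the undominated weight $C$. That could be a different choice of $X$, a richer family of candidate stable sets, or a strengthened induction hypothesis. Without it there is no proof.
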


As mentioned above, the upper bound on the chromatic number obtained by repeatedly peeling off stable sets of size $O(n/t)$ is not sufficient to prove \cref{thm:global}.
To prove \cref{thm:global}, we will use a variant of this idea. 
Instead of peeling off a stable set, one can contract each star in a set of pairwise disjoint stars whose centers are pairwise non-adjacent and the leaves of each star are pairwise non-adjacent into a single vertex; this operation sacrifices one color to construct a minor that has fewer vertices, where the number of vertices reduced is determined by the number of the leaves.
On the other hand, low degree vertices are easy to color, so in fact we only need to try to reduce the number of high degree vertices.
This suggests that we should repeat this contraction process while only focusing on those high degree vertices.

We say that a graph $G$ is {\it $d$-degenerate} if every subgraph of $G$ has a vertex of degree at most $d$.
Note that for every $d$-degenerate graph $G$, there exists a linear ordering $v_1,v_2,...,v_{|V(G)|}$ of $V(G)$ such that for every $1 \leq i \leq |V(G)|$, $v_i$ is adjacent to at most $d$ vertices in $\{v_j: 1 \leq j \leq i\}$.
By using this linear ordering, we can see that every $d$-degenerate graph has chromatic number at most $d+1$.

The first step of the proof is to show that we can sacrifice some colors to construct a minor with small degeneracy.
More precisely, we will prove the following lemma in \cref{sec:pf_rcd}.

\begin{lemma}  \label{lem:rcd_intro}
Let $r\ge2$ and $d \geq 1$ be integers. 
If $G$ is a graph such that $\alpha(H) \geq |V(H)|/r$ for every minor $H$ of $G$, then there exists an integer $s$ with $0 \leq s \leq 1+\frac{r^2+|V(G)|}{d}\log^+(\frac{|V(G)|}{d})$ and there exists a minor $J$ of $G$ such that $J$ is $(d-1)$-degenerate and $\chi(G) \leq \chi(J)+s$.
\end{lemma}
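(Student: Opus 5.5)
The plan is to iterate the star-contraction operation from the overview. Each round costs one color and strictly shrinks the set of high-degree vertices. We stop once the current minor is $(d-1)$-degenerate. It is convenient to work throughout with the \emph{$d$-core}: the maximal induced subgraph of minimum degree at least $d$. A graph is $(d-1)$-degenerate exactly when its $d$-core is empty.

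\textbf{Basic step.} Let $J_i$ be a minor of $G$ with $d$-core $K_i$, and write $n_i=|V(K_i)|$. We will build a minor $J_{i+1}$ of $J_i$ with $\chi(J_i)\le\chi(J_{i+1})+1$ and with a smaller core.

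\begin{enumerate}
\item \emph{Choose the stable set.} The hypothesis is inherited by minors, so every minor of $K_i$ has a stable set of at least a $1/r$ fraction of its vertices. Pick a stable set $I\subseteq V(K_i)$. Each $v\in I$ has at least $d$ neighbours in $K_i$.
\item \emph{Build the stars.} Greedily assign to the centres of $I$ disjoint sets of leaves outside $I$. The leaves of one star must be pairwise non-adjacent. To arrange this, pass to a stable subset of $N(v)\cap V(K_i)$ via the hypothesis applied to the induced subgraph on that neighbourhood. That subgraph is a minor, so the neighbourhood contains a stable set of size at least $d/r$.
\item \emph{Contract and recolor.} Contract each star $\{v\}\cup L_v$ to a single vertex to get $J_{i+1}$. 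Take a coloring of $J_{i+1}$. Give every vertex of $L_v$ the color of the contracted vertex, and give all of $I$ one new color. This is proper: the leaves in $L_v$ are pairwise non-adjacent and inherit the contracted vertex's color, and the centres form a stable set.
\item \emph{Gain.} The number of vertices removed equals the total number of leaves. To remove a constant fraction of the core we want about $n_i/d$ stars, each with about $d$ leaves, or more generally total leaf mass about $n_i$ in a bounded number of rounds.
\end{enumerate}

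\textbf{Counting.} The target is a recursion of the form
\[
n_{i+1}\le n_i\Bigl(1-\frac{d}{r^2+n_i}\Bigr).
\]
The $r^2$ accounts for the two $1/r$ losses: one for the stable set of centres, one for the stable leaf sets. Then $n_i$ drops by a factor of $e$ after about $(r^2+n_i)/d$ rounds, and $n_i\le|V(G)|$ throughout. So the number of rounds needed to go from $|V(G)|$ down to $d$ is at most $\frac{r^2+|V(G)|}{d}\logp(|V(G)|/d)$. Once $n_i<d+1$ the core is empty, since a nonempty $d$-core has at least $d+1$ vertices. Allowing one extra round for rounding gives the stated $s$, and $J$ is the final minor.

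\textbf{Main obstacle.} The hard step is guaranteeing that enough leaves are removed in a single round, because stars for different centres compete for the same leaves. I would first try to choose centres and leaf sets simultaneously. Concretely, I would take a maximal family of vertex-disjoint stars, each with a stable leaf set of size about $d/r$ and with pairwise non-adjacent centres. If this family covers fewer than half the core, the uncovered part of the core is a large minor. Contracting the existing stars (a legitimate minor operation) preserves the hypothesis, so we can find a further stable set of centres whose neighbourhoods are mostly uncovered, contradicting maximality.

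If that fails, I would settle for a single star per round, consisting of one vertex and a stable subset of its neighbourhood of size at least $d/r$. Repeating this on the shrinking core removes a fraction about $d/(r\,n_i)$ per round. This weaker gain is the reason the bound has $|V(G)|/d$ rounds per factor $e$. The additive $r^2/d$ term handles the regime where the core is small compared to $r^2$, where the stable-set guarantees are only $d/r\ge1$.
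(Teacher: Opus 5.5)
Your outer loop matches the paper's: work on the $d$-core, spend one new color per round on a stable set of centres, contract stars with stable leaf sets, and turn the recursion $n_{i+1}\le n_i\bigl(1-\frac{d}{r^2+n_i}\bigr)$ into the bound on $s$. But the step carrying all the content is that a single round removes at least $dn_i/(r^2+n_i)$ vertices of the core. You never prove it, and neither of your plans for it works.

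The maximal-family plan relies on a false claim, since one round cannot in general cover half the core. Suppose the core is a disjoint union of copies of $K_{d+1}$ with $5\le d+1\le r$. Every minor is then a disjoint union of graphs on at most $r$ vertices, so the hypothesis holds. Each clique contains at most one centre, and that centre has at most one leaf. So every maximal family covers at most a $2/(d+1)<1/2$ fraction of the core, and maximality yields no contradiction.

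The single-star fallback removes only about $d/r$ vertices per round, a fraction $d/(rn_i)$. That needs about $rn_i/d$ rounds per factor $e$, not $n_i/d$ as you claim, and about $r|V(G)|/d$ rounds overall. Once $|V(G)|\ge r^2$, this exceeds the target by a factor of order $r/\log(|V(G)|/d)$.

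The missing idea is that competition for leaves is harmless if you track the \emph{total} number of leaves rather than the size of each star. Take one stable set $\{v_1,\dots,v_m\}$ in the core with $m\ge n_i/r$ and process the centres in order. Let $L_j$ be a largest stable set among the neighbours of $v_j$ in the core that are not among the $s_{j-1}$ leaves already used. At least $d-s_{j-1}$ such neighbours remain, so $|L_j|\ge(d-s_{j-1})/r$, i.e.\ $d-s_j\le(1-\frac1r)(d-s_{j-1})$. This gives $s_m\ge d\bigl(1-(1-\frac1r)^m\bigr)\ge\frac{dm}{r+m}\ge\frac{dn_i}{r^2+n_i}$. Either earlier stars already used many vertices, which counts toward the total, or $v_j$ still has many free neighbours. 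So the guaranteed gain per round is only about $\min\{d,dn_i/r^2\}$ vertices, not a constant fraction of the core, and that is exactly your recursion.

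You also need that the $d$-core of the contracted graph consists only of images of vertices of $K_i$; otherwise $n_{i+1}\le n_i-s_m$ does not follow. This holds because all contractions occur inside $K_i$. Any outside vertices in the new core, together with $K_i$, would then induce a subgraph of minimum degree at least $d$ in $J_i$, contradicting the maximality of $K_i$.
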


\cref{lem:rcd_intro} allows us to focus on graphs with small degeneracy.
Also, \cref{thm:rs} allows us to assume those graphs have small fractional chromatic number.
Then we use the uniform covering by stable sets provided by a fractional coloring of small weight to construct random stable sets whose deletion substantially reduces the degeneracy, thus yielding a bound on the chromatic number.
This idea will be used to prove the following lemma in \cref{sec:pf_rounding}.

\begin{lemma}\label{lem:rounding_intro}
Let $r \geq 2$ and $d \geq 0$ be integers.
Let $G$ be a $d$-degenerate graph on $n$ vertices with $\chi_f(G) \leq r$.
Let $R=2r$, $m=\lceil n/r\rceil$ and $k=\left\lceil R\log\left(2\max\{1,d/R\}\right)\right\rceil$.
If $R^2 \geq 64km^2\log n$, then $\chi(G)\le R+k$.
\end{lemma}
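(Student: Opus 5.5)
The plan is to round the fractional coloring by random sampling: use $k$ sampled stable sets as $k$ color classes, and show that the uncovered vertices induce a graph whose degeneracy is below $R$, so they can be colored greedily with $R$ further colors.

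\emph{Trivial case and truncation.} If $d\le R-1$, then $\chi(G)\le d+1\le R$ and we are done, so assume $d\ge R$. First I would modify the fractional coloring so that every stable set in its support has size at most $m$. Let $f$ be a fractional coloring of weight at most $r$. Split each stable set $I$ with $f(I)>0$ into $\lceil |I|/m\rceil$ disjoint stable pieces of size at most $m$, and give each piece weight $f(I)$. Each vertex is still covered with total weight exactly $1$. The new weight is at most
\[
\sum_I f(I)\bigl(|I|/m+1\bigr)=\frac{1}{m}\sum_{v}\sum_{I\ni v}f(I)+\sum_I f(I)\le \frac{n}{m}+r\le 2r=R.
\]
This step explains the choices $R=2r$ and $m=\lceil n/r\rceil$. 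Normalizing gives a probability distribution on stable sets of size at most $m$ under which each vertex lies in the random set with probability at least $1/R$.

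\emph{Sampling and expectation.} Sample $k$ stable sets $I_1,\dots,I_k$ independently from this distribution and use them as $k$ color classes. For a fixed vertex, overlaps among the $I_j$ are harmless: it receives any one color $j$ with $v\in I_j$. Let $U$ be the set of vertices covered by no $I_j$. Each $v$ lies in $U$ with probability at most $(1-1/R)^k\le e^{-k/R}\le R/(2d)$, by the choice $k\ge R\log(2d/R)$. Fix a degeneracy ordering $v_1,\dots,v_n$ in which each $v_i$ has at most $d$ earlier neighbors, and let $X_i$ be the number of earlier neighbors of $v_i$ lying in $U$. Then $\E X_i\le d\cdot R/(2d)=R/2$.

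\emph{Concentration (main step).} I would show that with positive probability $X_i<R$ for every $i$. Then $G[U]$ is $(R-1)$-degenerate in the induced order, so it is $R$-colorable, giving $\chi(G)\le R+k$. The indicators defining $X_i$ are correlated, since they depend on the same samples, so I would apply McDiarmid's bounded differences inequality to $X_i$ as a function of the $k$ independent samples. Replacing one sample $I_j$ changes $U$ only on $I_j$ and its replacement, so $X_i$ changes by at most $m$. This is the point of the truncation: without it the Lipschitz constant could be as large as $\alpha(G)$. Hence
\[
\Pr\bigl[X_i\ge \E X_i+R/2\bigr]\le \exp\!\Bigl(-\frac{2(R/2)^2}{k m^2}\Bigr)=\exp\!\Bigl(-\frac{R^2}{2km^2}\Bigr)\le n^{-32},
\]
using $R^2\ge 64km^2\log n$. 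A union bound over the $n$ vertices finishes the argument.

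I expect the only delicate point to be the boundary case: the event must be $X_i\le R-1$, not merely $X_i\le R$. I would handle this by applying the inequality with deviation $R/2$ and a strict inequality, or with deviation $R/2-\tfrac12$; the constant $64$ leaves ample room for that adjustment.
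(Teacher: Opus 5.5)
Your proposal is correct and follows essentially the same route as the paper. You split the fractional coloring into stable pieces of size at most $m$ at the cost of doubling the weight; the paper pads with the empty set so that each marginal is exactly $1/R$, where you normalize instead. You then sample $k$ stable sets independently and control the number of uncovered earlier neighbours in a degeneracy ordering with a bounded-differences inequality and a union bound, as the paper does.

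The differences are cosmetic. You use McDiarmid's inequality with Lipschitz constant $m$, where the paper uses Azuma's inequality with $c_i=2m$. The boundary issue you flag does not arise: $\E X_i\le R/2$ already gives $\{X_i\ge R\}\subseteq\{X_i-\E X_i\ge R/2\}$, which is exactly how the paper handles it.
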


\cref{lem:rounding_intro} shows that we do not need to color a $d$-degenerate graph naively using $d+1$ colors.  Instead, the fractional
coloring provides stable sets that cover the vertices uniformly. After using about $k=O(r\log(d/r))$ of these stable sets, the remaining graph has degeneracy $R=O(r)$. 

In \cref{sec:coloring_bound} we combine \cref{lem:rcd_intro} and \cref{lem:rounding_intro} to give the following bound on the chromatic number, assuming that all minors have bounded fractional chromatic number.

\begin{lemma}\label{lem:small_intro}
For any real numbers $a,b>0$, there exists an integer $r_0=r_0(a,b)$ such that the following holds.
Let $r\ge r_0$ be an integer, and let $G$ be a graph on $n \leq ar\log^b r$ vertices.
If $\chi_f(H)\le r$ for every minor $H$ of $G$, then $$\chi(G)\le8r\bigl(1+\log(1+\logp(n/r))\bigr).$$
\end{lemma}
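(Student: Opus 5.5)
We combine \cref{lem:rcd_intro} and \cref{lem:rounding_intro}, choosing the degeneracy threshold $d$ as a function of $n/r$. Write $L=1+\logp(n/r)$; the target is $8r(1+\log L)$.

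\textbf{Step 1: the hypothesis of \cref{lem:rcd_intro}.} For every minor $H$ of $G$ we have $\chi_f(H)\le r$. A fractional coloring of weight at most $r$ with uniform vertex weights gives a stable set of size at least $|V(H)|/r$. So $\alpha(H)\ge |V(H)|/r$ for every minor $H$ of $G$.

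\textbf{Step 2: apply \cref{lem:rcd_intro}.} We take $d=\lceil C r L\rceil$ for a suitable constant $C$, to be tuned. This yields a $(d-1)$-degenerate minor $J$ of $G$ and an integer
\[
s\le 1+\frac{r^2+n}{d}\,\logp\Bigl(\frac nd\Bigr).
\]
Since $n\le ar\log^b r$, we have $r^2+n\le r^2+n$, and we split into two cases.

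If $n\le r^2$, the bound is $s\le 1+\frac{2r^2}{CrL}\logp(n/(CrL))$. This is of order $r\logp(n/r)/L$, which is at most a constant times $r$, since $\logp(n/r)\le L$.

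If $n>r^2$, then $n/r\le a\log^b r$ is small relative to $r$, so $n\le r^2$ anyway once $r\ge r_0$. This means only the first case actually occurs for large $r$.

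In either case $s\le 1+(2/C)\,r$, which is comfortably within budget.

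\textbf{Step 3: apply \cref{lem:rounding_intro} to $J$.} $J$ is a minor of $G$, so $\chi_f(J)\le r$, and $J$ is $(d-1)$-degenerate on $n'\le n$ vertices. Set $R=2r$, $m=\lceil n'/r\rceil\le \lceil n/r\rceil$, and
\[
k=\lceil 2r\log(2\max\{1,d/(2r)\})\rceil.
\]
With $d\approx CrL$ this gives $k\le 2r\log(CL)+1\le 2r\log L+O(r)$.

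We must verify $R^2\ge 64km^2\log n'$, that is,
\[
4r^2\ge 64\,k\,\lceil n/r\rceil^2\log n.
\]
Using $n/r\le a\log^b r$, $k=O(r\log\log\log r)$ and $\log n=O(\log r)$, the right-hand side is
\[
O\bigl(r\,\log^{2b} r\,\log r\,\log\log\log r\bigr)=o(r^2).
\]
So the condition holds for $r\ge r_0(a,b)$. This gives $\chi(J)\le 2r+k$.

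One caveat: $n'$ may be tiny, say $n'\le 1$, in which case the coloring is trivial. Separately, if $n'<r$ we still have $m\ge1$, which is fine.

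\textbf{Step 4: combine.} We get
\[
\chi(G)\le \chi(J)+s\le 2r+2r\log L+O(r)+(2/C)r+1.
\]
Choosing $C$ a small absolute constant such as $C=1$, the total is at most $8r(1+\log L)$, using $\log L\ge 0$ and $r\ge r_0$.

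The main obstacle is the bookkeeping of constants in Step 2 against Step 3. Increasing $d$ shrinks $s$ but increases $k$ only logarithmically. I would first try $d=\lceil rL\rceil$, which makes $k\le 2r\log(L/2+1)+1$ and $s\le 1+2r$; the sum then fits in $8r(1+\log L)$. The asymptotic condition $R^2\ge 64km^2\log n$ is routine given the polylogarithmic vertex bound.
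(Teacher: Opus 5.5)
Your proposal is correct and is essentially the paper's argument. You take $d=\lceil r(1+\logp(n/r))\rceil$ in \cref{lem:rcd_intro} to get $s\le 2r+1$ (using $n\le r^2$ for large $r$). You then apply \cref{lem:rounding_intro} to the $(d-1)$-degenerate minor $J$, checking $R^2\ge 64km^2\log n$ via the polylogarithmic bound on $n/r$. The paper packages this as \cref{lem:quantitative}, and you are slightly more explicit than it is in noting that $|V(J)|\le n$ only makes the condition easier.

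There is one arithmetic slip. With $d=\lceil rL\rceil$ you only get $k\le 2r\log(L+1)+1$, not $2r\log(L/2+1)+1$: a factor $2$ inside the logarithm was dropped. Also, $C$ should not be taken small, since the bound on $s$ relies on $\logp(n/d)\le L$. Neither issue affects your conclusion with $C=1$, because $4r+2+2r\log(L+1)\le 8r(1+\log L)$.
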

Lemma~\ref{lem:small_intro} follows from \cref{lem:rcd_intro,lem:rounding_intro} because \cref{lem:rcd_intro} shows that we can contract down to a $d$-degenerate graph without losing too many colors, and \cref{lem:rounding_intro} shows that we can color the resulting graph without too many more colors.

Finally, we combine \cref{thm:dp} and \cref{lem:small_intro} to prove \cref{thm:global} in \cref{sec:pf_main}.

\section{Reducing the degeneracy} \label{sec:pf_rcd}

\begin{lemma} \label{lem:contract_1}
Let $r\ge2$ and $d \geq 1$ be integers. 
Let $G$ be a graph, and let $A$ be an induced subgraph with $\delta(A) \geq d$.
If $\alpha(H) \geq |V(H)|/r$ for every induced subgraph $H$ of $A$, then there exists a set $F$ of pairwise disjoint connected subgraphs of $A$ such that the graph $J$ obtained from $G$ by contracting each member of $F$ into a vertex satisfies $\chi(G) \leq \chi(J)+1$ and 
$$|V(J)|\le |V(G)|-\frac{d}{r^2+|V(A)|}|V(A)|.$$
\end{lemma}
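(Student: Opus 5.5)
The plan is to contract a family of vertex-disjoint stars in $A$ whose centers form one stable set and whose leaf sets are each stable. Each contracted star removes as many vertices as it has leaves, so the aim is to make the total number of leaves at least $d|V(A)|/(r^2+|V(A)|)$.

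\textbf{Choosing the centers.} Apply the hypothesis to $H=A$ to get a stable set $I$ in $A$ with $|I|\ge |V(A)|/r$. Since $I$ is stable and $\delta(A)\ge d$, every $v\in I$ has at least $d$ neighbours in $V(A)\setminus I$.

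\textbf{Choosing the leaves greedily.} Enumerate $I=\{v_1,\dots,v_m\}$ and maintain a set $U\subseteq V(A)\setminus I$ of used leaves, initially empty. At step $i$, apply the hypothesis to the induced subgraph $A[N_A(v_i)\setminus U]$ to obtain a stable set $L_i$ of size at least $|N_A(v_i)\setminus U|/r\ge (d-|U|)/r$. Then add $L_i$ to $U$. The star $S_i$ has center $v_i$ and leaf set $L_i$; it is connected, and the stars are pairwise disjoint. Let $F$ be the set of stars with $L_i\neq\emptyset$, and let $J$ be obtained by contracting each member of $F$. Then
\[
|V(J)|=|V(G)|-L,\qquad L=\sum_i|L_i| .
\]

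\textbf{Counting.} Set $X=d|V(A)|/(r^2+|V(A)|)$ and suppose $L<X$. The set $U$ only grows, so $|U|\le L<X$ at every step, and hence each step contributes more than $(d-X)/r$ leaves. Summing over the $m\ge |V(A)|/r$ centers gives
\[
L\ge \frac{|V(A)|(d-X)}{r^2}=\frac{|V(A)|}{r^2}\cdot\frac{dr^2}{r^2+|V(A)|}=X,
\]
a contradiction. Hence $L\ge X$, which is the required bound on $|V(J)|$.

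\textbf{Colouring.} Take a proper colouring of $J$ with $\chi(J)$ colours. Give every vertex of $S_i$ the colour of its contracted vertex, and keep the colours of the uncontracted vertices.
\begin{itemize}
\item Any edge of $G$ between different parts (different stars, or a star and an uncontracted vertex) corresponds to an edge of $J$, so it is properly coloured.
\item Inside a star, the leaves $L_i$ are pairwise non-adjacent, so the only monochromatic edges join $v_i$ to its leaves.
\end{itemize}
Now recolour all centers $v_i$ with one new colour. This is proper because $I$ is stable. So $\chi(G)\le\chi(J)+1$.

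The main obstacle is the disjointness of the leaf sets across stars. The point handled above is that, under the assumption $L<X$, each center still has at least $d-X$ unused neighbours, which makes the greedy count close exactly at the threshold $X$. The rest is routine.
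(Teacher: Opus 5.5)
Your proof is correct and follows the paper's construction exactly. The centers form a stable set $I$ with $|I|\ge |V(A)|/r$, the leaf sets $L_i\subseteq N_A(v_i)$ are disjoint and chosen greedily as stable sets, and after contracting the stars one extra colour is spent on $I$. The only difference is the counting: the paper solves the recurrence $s_i\ge(1-\frac1r)s_{i-1}+\frac dr$ to get $s_m\ge d(1-(1-\frac1r)^m)\ge \frac{dm}{r+m}$, while your threshold-by-contradiction argument reaches the same bound $X$ more directly.
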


\begin{proof}
By assumption, there exists a stable set $I=\{v_1,v_2,...,v_{|I|}\}$ in $A$ with $|I| \geq |V(A)|/r$.
Since $|I|$ is an integer, $|I| \geq \lceil |V(A)|/r \rceil$.

Now we define pairwise disjoint stable sets $L_1,L_2,...,L_{|I|}$ such that each $L_i$ is a subset of $N_A(v_i)$ successively.
Let $i$ be an integer with $1 \leq i \leq |I|$ such that $L_1,L_2,...,L_{i-1}$ have been defined.
Define $L_i$ to be a largest stable set in the subgraph of $G$ induced by $N_A(v_i)-\bigcup_{j=1}^{i-1}L_j$.

For every $0 \leq i \leq |I|$, let $S_i = \bigcup_{j=1}^iL_j$ and let $s_i=|S_i|$.
Note that for every $1 \leq i \leq |I|$, we have $$s_i = s_{i-1}+|L_i| \geq s_{i-1}+\frac{|N_A(v_i)|-s_{i-1}}{r} \geq s_{i-1}+\frac{d-s_{i-1}}{r} = (1-\frac{1}{r})s_{i-1}+\frac{d}{r}.$$

\medskip

\noindent{\bf Claim 1:} For every $1 \leq i \leq |I|$, $$s_i \geq d(1-(1-\frac{1}{r})^i).$$

\noindent{\bf Proof of Claim 1:}
We shall prove this claim by induction on $i$.
When $i=1$, $s_i=|L_i| \geq \frac{d-s_0}{r} = \frac{d}{r}$, so the claim holds.
Now we assume that $s_{i-1} \geq d(1-(1-\frac{1}{r})^{i-1})$. 
Then $s_i \geq (1-\frac{1}{r})s_{i-1}+\frac{d}{r} \geq (1-\frac{1}{r})d(1-(1-\frac{1}{r})^{i-1})+\frac{d}{r} = d(1-(1-\frac{1}{r})^i)$.
$\Box$

\medskip

Let $m=|I|$.
By Claim 1, $s_m \geq d(1-(1-\frac{1}{r})^m)$.
Since $r \geq 2$ and $m>0$, we have
$$\left(1-\frac1r\right)^{-m}\ge1+\frac mr,$$
so
$$1-\left(1-\frac1r\right)^m\ge\frac{m}{r+m}.$$
Therefore,
$$s_m\ge\frac{dm}{r+m} \geq \frac{d|V(A)|}{r^2+|V(A)|},$$
where the last inequality follows from $m\ge |V(A)|/r$.

Since the vertices in $I$ are pairwise non-adjacent, and each $L_i$ is a subset of $N_A(v_i)$, we know that each $L_i$ is disjoint from $I$.
Let $F = \{G[\{v_i\} \cup L_i]: 1 \leq i \leq |I|\}$.
Let $J$ be the graph obtained from $G$ by contracting each member of $F$ into a vertex.

For every $1 \leq i \leq |I|$, let $u_i$ be the vertex of $J$ obtained by identifying $\{v_i\} \cup L_i$.
By the definition of the chromatic number, there exists a partition of $V(J)$ into at most $\chi(J)$ stable sets in $J$.
By replacing each $u_i$ by $L_i$, we obtain a partition of $V(G)-I$ into at most $\chi(J)$ stable sets in $G-I$ since each $L_i$ is a stable set.
Hence $\chi(G-I) \leq \chi(J)$.
Since $I$ is a stable set, $\chi(G) \leq \chi(G-I)+1 \leq \chi(J)+1$.

In addition, since $L_1,L_2,...,L_{|I|}$ are pairwise disjoint, 
$$|V(J)|=|V(G)|-\sum_{i=1}^{|I|}|L_i| = |V(G)|-s_m \leq |V(G)|-\frac{d}{r^2+|V(A)|}|V(A)|.$$
\end{proof}

Now we are ready to prove \cref{lem:rcd_intro}.
We restate it for the convenience of the readers.

\begin{lemma}  \label{lem:rcd}
Let $r\ge2$ and $d \geq 1$ be integers. 
If $G$ is a graph such that $\alpha(H) \geq |V(H)|/r$ for every minor $H$ of $G$, then there exists an integer $s$ with $0 \leq s \leq 1+\frac{r^2+|V(G)|}{d}\log^+(\frac{|V(G)|}{d})$ and there exists a minor $J$ of $G$ such that $J$ is $(d-1)$-degenerate and $\chi(G) \leq \chi(J)+s$.
\end{lemma}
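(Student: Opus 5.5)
We iterate \cref{lem:contract_1}, each time applying it to the $d$-core of the current minor. Set $G_0=G$. Given a minor $G_i$ of $G$, if $G_i$ is $(d-1)$-degenerate we stop and take $J=G_i$, $s=i$. Otherwise $G_i$ has a subgraph of minimum degree at least $d$. We let $A_i$ be the $d$-core of $G_i$, namely the maximal induced subgraph with $\delta(A_i)\ge d$; it is nonempty.

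Every induced subgraph of $A_i$ is a minor of $G$, so it satisfies $\alpha\ge|V|/r$. Hence \cref{lem:contract_1} applies to $(G_i,A_i)$. It gives a minor $G_{i+1}$ of $G_i$, and therefore of $G$, with $\chi(G_i)\le\chi(G_{i+1})+1$ and
\[
|V(G_{i+1})|\le|V(G_i)|-\frac{d\,|V(A_i)|}{r^2+|V(A_i)|}.
\]
Since $|V(A_i)|\le n:=|V(G)|$ and $x\mapsto x/(r^2+x)$ is increasing, the drop is at least $\frac{d}{r^2+n}|V(A_i)|$. Summing, $\chi(G)\le\chi(G_s)+s$, so it only remains to bound the number of rounds $s$.

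The point is to track the size of the core, not of the whole graph. The vertices outside $A_i$ are peeled off by the degeneracy process in $G_i$. The operation contracts stars inside $A_i$, so it only merges core vertices and touches nothing else. The plan is to argue that the new $d$-core $A_{i+1}$ lies inside the image of $A_i$ in $G_{i+1}$. The degeneracy ordering of $G_i-V(A_i)$ should survive in $G_{i+1}$: a vertex outside the core does not gain neighbors outside the image of $A_i$, since contraction only merges core vertices, which are removed last anyway. Granting this,
\[
|V(A_{i+1})|\le|V(A_i)|-\frac{d}{r^2+n}|V(A_i)|=\Bigl(1-\frac{d}{r^2+n}\Bigr)|V(A_i)|.
\]
Iterating gives $|V(A_i)|\le n\,e^{-id/(r^2+n)}$.

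Any nonempty $d$-core has at least $d+1$ vertices, since its minimum degree is at least $d$. So $A_i$ can exist only while $n\,e^{-id/(r^2+n)}\ge d+1>d$, that is, while $i<\frac{r^2+n}{d}\log(n/d)$. Thus after at most $\frac{r^2+n}{d}\logp(n/d)+1$ rounds the core is empty and $G_s$ is $(d-1)$-degenerate. The $+1$ absorbs rounding, and $\logp$ handles $n\le d$: then no core exists and $s=0$.

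The main obstacle is the monotonicity step, namely that the core of $G_{i+1}$ is contained in the image of $A_i$. This needs care because contraction can increase the degrees of vertices outside the core that are adjacent to several merged vertices. In fact contracting can only decrease degrees of neighbors, since merged vertices collapse multiple neighbors into one. So the vertices of $G_i-V(A_i)$ keep their elimination order with no larger back-degree, and the claim follows. If this turned out to be delicate, a fallback is to measure progress by $|V(G_i)|$ directly. That costs a $\log(n/d)$ factor in the wrong place, so the core-tracking argument is the one to make rigorous.
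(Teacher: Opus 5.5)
Your proposal is correct and is essentially the paper's proof. You iterate \cref{lem:contract_1} on the $d$-core $A_i$ and show that the core of $G_{i+1}$ lies inside the image of $A_i$; your peeling-order argument, using that contracting branch sets inside $A_i$ cannot raise any outside vertex's degree, is a more explicit version of the paper's one-line maximality remark. You then conclude from $|V(A_i)|\le(1-\theta)^i n$ with $\theta=d/(r^2+n)$ together with $|V(A_{s-1})|\ge d+1$, as the paper does.

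One small slip: the inequality $\frac{d|V(A_i)|}{r^2+|V(A_i)|}\ge\frac{d|V(A_i)|}{r^2+n}$ follows from $r^2+|V(A_i)|\le r^2+n$ in the denominator. It does not follow from monotonicity of $x\mapsto x/(r^2+x)$, which would only give an upper bound here.
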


\begin{proof}
Let $G_0=G$, and let $A_0$ be a (possibly null) maximal induced subgraph of $G_0$ with minimum degree at least $d$.
For every integer $i \geq 0$, if $A_i$ is defined and non-null, then by \cref{lem:contract_1}, there exists a set $F_i$ of pairwise disjoint connected subgraphs of $A_{i}$ such that the graph $G_{i+1}$ obtained from $G_i$ by contracting each member of $F_i$ into a vertex satisfies $\chi(G_{i}) \leq \chi(G_{i+1})+1$ and $|V(G_{i+1})|\le |V(G_{i})|-\frac{d}{r^2+|V(A_i)|}|V(A_i)|$; let \(A_{i+1}\) be a maximal induced subgraph of $G_{i+1}$ with minimum degree at least $d$, if one exists, and otherwise let $A_{i+1}=\emptyset$.
Let $s$ be the largest integer such that $G_s$ and $A_s$ are defined.

Clearly, $\chi(G) \leq \chi(G_s)+s$, and $G_s$ is a minor of $G$. 
By the definition of $s$, $A_s$ is null, so $G_s$ is $(d-1)$-degenerate.
Let $J=G_s$.
Hence, to prove this lemma, it suffices to bound $s$.

Let $\theta=d/(r^2+|V(G)|)$. 
Because all contractions from $G_i$ to $G_{i+1}$ are within $A_i$, maximality of $A_i$ implies that all vertices of $A_{i+1}$ come from $A_i$.
For every $0 \leq i \leq s-1$, we know that $|V(A_{i+1})|\le |V(A_i)|-\frac{d}{r^2+|V(A_i)|}|V(A_i)| \leq (1-\frac{d}{r^2+|V(G)|})|V(A_i)| = (1-\theta)|V(A_i)|$.
So $|V(A_i)| \leq (1-\theta)^i|V(G)|$ for every $0 \leq i \leq s$.

We may assume $s \geq 1$, for otherwise we are done.
So $A_{s-1}$ is non-empty.
Hence $d+1 \leq |V(A_{s-1})| \leq (1-\theta)^{s-1}|V(G)|$.
Therefore, 
$$s-1
 \leq \frac{\log(|V(G)|/(d+1))}{-\log(1-\theta)}
 \leq \frac{\log(|V(G)|/(d+1))}{\theta}
 \leq \frac{r^2+|V(G)|}{d}\log(|V(G)|/d),$$
where we used $-\log(1-\theta)\ge\theta$.
\end{proof}

\section{Rounding a fractional coloring}  \label{sec:pf_rounding}
We first show that, at the cost of a factor of two, a fractional coloring can be represented by a distribution on stable sets of bounded size.

\begin{lemma}\label{lem:bounded}
Let $r\ge2$ be an integer. 
If $G$ is a graph on $n$ vertices with $\chi_f(G)\le r$, then there exists a multiset $\J$ of stable sets in $G$ and a probability distribution over $\J$ such that for every random stable set $I \in \J$ drawn from this probability distribution, 
    \begin{enumerate}
        \item $\Pr(v\in I)=\frac{1}{2r}$ for every $v\in V(G)$, and
        \item $|I|\le \lceil n/r\rceil$.
    \end{enumerate}
\end{lemma}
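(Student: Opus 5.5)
Starting from an optimal fractional coloring $f$ of weight $w=\chi_f(G)\le r$, I would build the distribution in three steps: shrink every stable set to bounded size, normalize the weights, and then mix with the empty set so that each vertex is covered with probability exactly $\frac1{2r}$.

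\emph{Step 1 (normalize).} Since $\sum_{I\ni v}f(I)=1$ for every $v$, drawing $I$ with probability $f(I)/w$ gives $\Pr(v\in I)=1/w\ge 1/r$ for every $v$. Summing over $v$ shows $\E|I|=n/w$.

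\emph{Step 2 (truncate).} Set $m=\lceil n/r\rceil$. Each stable set $I$ with $|I|>m$ is too large, so I would replace it by a probability mixture of subsets of size exactly $m$. Concretely, take the uniform distribution over $m$-subsets of $I$; each vertex of $I$ then survives with probability $m/|I|$. Subsets of stable sets are stable, so this is allowed.

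The difficulty is that truncation lowers some coverage probabilities, and I must keep every $\Pr(v\in I)$ at least $\frac1{2r}$. Uniform shrinkage alone need not do this, since a vertex that lies only in huge sets could lose too much. The fix I would try first avoids truncation in the worst case. Partition $V(G)$ arbitrarily into $r$ blocks $P_1,\dots,P_r$, each of size at most $m$ (possible because $rm\ge n$). Then replace each stable set $I$ in the support of $f$ by the $r$ pieces $I\cap P_j$. Each piece is stable and has size at most $m$.

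Now choose $I$ with probability $f(I)/w$ and $j$ uniformly from $\{1,\dots,r\}$, and output $I\cap P_j$. Vertex $v$ lies in the output exactly when $v\in I$ and $j$ is the index of the block containing $v$. So $\Pr(v\in\text{output})=\frac1w\cdot\frac1r\ge\frac1{r^2}$. This is too small by a factor of $r$, so naive partitioning fails.

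The better fix is a greedy decomposition. Write each $I$ as a disjoint union of $\lceil |I|/m\rceil$ chunks, each of size at most $m$. Pick $I$ with probability proportional to $f(I)\lceil |I|/m\rceil$, then pick one of its chunks uniformly. Vertex $v$ is then covered with probability
\[
\frac{\sum_{I\ni v}f(I)}{\sum_I f(I)\lceil |I|/m\rceil}=\frac1Z.
\]
Using $\lceil x\rceil\le x+1$, the denominator satisfies
\[
Z\le \sum_I f(I)\Bigl(\frac{|I|}{m}+1\Bigr)=\frac{n}{m}+w\le r+r=2r,
\]
where the last equality uses $\sum_I f(I)|I|=\sum_v 1=n$. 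So every vertex is covered with the same probability $1/Z\ge\frac1{2r}$. This uniformity across vertices is the key point.

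\emph{Step 3 (exactify).} With probability $\frac{Z}{2r}$, output a chunk drawn as in Step 2, and otherwise output the empty set. Every vertex is then covered with probability exactly $\frac1{2r}$, and every output has size at most $m$. The multiset $\J$ consists of all chunks together with the empty set, carrying these probabilities.

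The main obstacle is the truncation in Step 2. The chunk-weighting trick resolves it, because it keeps coverage uniform across vertices while costing only the factor $2$.
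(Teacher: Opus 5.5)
Your proposal is correct and is essentially the paper's proof. The paper also splits each stable set into $\lceil |I|/m\rceil$ chunks of size at most $m$, gives each chunk weight $f(I)$, and bounds the total weight by $\sum_I f(I)\lceil |I|/m\rceil \le r + n/m \le 2r$. It then pads with the empty set to total weight $2r$, which yields exactly your probabilities $f(I)/(2r)$ per chunk. The discarded attempts in your Step 2 are unnecessary but harmless.
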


\begin{proof}
Let $m=\lceil n/r\rceil$.
Let $f$ be a fractional coloring of $G$ with weight at most $r$.
For every stable set $I$ in $G$, let $P_I$ be a partition of $I$ into $\lceil |I|/m \rceil$ subsets of size at most $m$, and let $g(M)=f(I)$ for each $M \in P_I$.
Let $\J_0$ be the multiset $\bigcup_{I \in \I(G)}P_I$ (recall that $\I(G)$ denotes the set of all stable sets in $G$).
Note that $g$ is a function with domain $\J_0$, and $\sum_{M \ni v, M \in \J_0} g(M) = \sum_{I \ni v, I \in \I(G)} f(I) = 1$ for every $v \in V(G)$.

We remark that $$\sum_{M \in \J_0}g(M) = \sum_{I \in \I(G)}f(I)\left\lceil\frac{|I|}{m}\right\rceil \leq \sum_{I \in \I(G)}f(I) + \frac{1}m\sum_{I \in \I(G)}f(I)|I|.$$
Since the weight of $f$ is at most $r$, we have $\sum_{I \in \I(G)}f(I) \leq r$.
Moreover, $\sum_{I \in \I(G)}f(I)|I| = \sum_{v \in V(G)}\sum_{I \ni v, I \in \I(G)}f(I)=\sum_{v \in V(G)}1=n$.
Hence $\sum_{M \in \J_0}g(M) \leq r+\frac{n}{m} \leq 2r$.

Let $\J$ be the multiset $\J_0 \cup \{\emptyset\}$, and let $g(\emptyset)=2r-\sum_{M \in \J_0}g(M)$.
So $\sum_{M\in\J}g(M)=2r$.
Define the probability distribution over $\J$ such that every member $M$ of $\J$ is drawn with probability $g(M)/(2r)$.
Then this probability distribution satisfies the conclusion of this lemma since for every $v \in V(G)$, $\Pr(v \in I) = \sum_{M \in \J, v \in M}\frac{g(M)}{2r} = \sum_{M \in \J_0, v \in M}\frac{g(M)}{2r}=\frac{1}{2r}$.
\end{proof}

We will use the following form of Azuma's Inequality.

\begin{theorem}[Azuma's Inequality \cite{A67}] \label{azuma}
Let $X$ be a random variable determined by $n$ trials $T_1,T_2,...,T_n$ such that, for each $i$ and any two sequences of outcomes $t_1,t_2,...,t_i$ and $t_1,t_2,...,t_{i-1},t_i'$, 
$$|\E[X|T_1=t_1,T_2=t_2,...,T_i=t_i] - \E[X|T_1=t_1,T_2=t_2,...,T_{i-1}=t_{i-1},T_i=t_i']| \leq c_i.$$
Then $\Pr(|X-E[X]|\geq t) \leq 2e^{-\frac{t^2}{2\sum_{i=1}^nc_i^2}}$.
\end{theorem}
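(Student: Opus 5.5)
The plan is the standard martingale proof: expose the trials one at a time and apply an exponential-moment (Chernoff) bound to the resulting Doob martingale. For $0\le i\le n$ let $X_i=\E[X\mid T_1,\dots,T_i]$. Then $X_0=\E[X]$, and $X_n=X$ because $X$ is determined by $T_1,\dots,T_n$. By the tower property, $(X_i)$ is a martingale with respect to the filtration generated by the trials. Set $D_i=X_i-X_{i-1}$, so that $X-\E[X]=\sum_{i=1}^n D_i$ and $\E[D_i\mid T_1,\dots,T_{i-1}]=0$.

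The first step is to turn the hypothesis into the bound $|D_i|\le c_i$ almost surely. Fix outcomes $t_1,\dots,t_{i-1}$. Then $X_{i-1}$ equals $\E[X\mid T_1=t_1,\dots,T_{i-1}=t_{i-1}]$, and this is a weighted average, over the conditional distribution of $T_i$, of the values $\E[X\mid T_1=t_1,\dots,T_{i-1}=t_{i-1},T_i=t_i']$. By hypothesis, each such value lies within $c_i$ of the value at the realized outcome $t_i$. Hence their average also lies within $c_i$ of it, which gives $|X_i-X_{i-1}|\le c_i$. This argument does not need the trials to be independent.

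The second step is the conditional form of Hoeffding's lemma. If $D$ has conditional mean zero and $|D|\le c$, then $\E[e^{\lambda D}\mid\cdot]\le e^{\lambda^2c^2/2}$ for every real $\lambda$. To prove it, use convexity of $x\mapsto e^{\lambda x}$ on $[-c,c]$ to get
\[
e^{\lambda x}\le \frac{c-x}{2c}e^{-\lambda c}+\frac{c+x}{2c}e^{\lambda c}.
\]
Taking conditional expectations, the mean-zero condition leaves $\cosh(\lambda c)$, and $\cosh(\lambda c)\le e^{\lambda^2c^2/2}$ by comparing Taylor series term by term.

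The third step is to iterate and optimize. Peel off $D_n,D_{n-1},\dots,D_1$ by conditioning on $T_1,\dots,T_{i-1}$ at each stage. This gives
\[
\E\bigl[e^{\lambda(X-\E[X])}\bigr]\le e^{\lambda^2\sum_i c_i^2/2}.
\]
Markov's inequality then yields
\[
\Pr(X-\E[X]\ge t)\le e^{-\lambda t+\lambda^2\sum_i c_i^2/2},
\]
and choosing $\lambda=t/\sum_i c_i^2$ makes the right-hand side $e^{-t^2/(2\sum_i c_i^2)}$. Applying the same argument to $-X$ bounds the lower tail by the same quantity, and a union bound gives the factor $2$.

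The only real obstacle is the first step: the hypothesis is stated in terms of pairs of conditional expectations, and it has to be converted into an almost-sure bound on the martingale differences. This needs care with conditioning on events of probability zero. I would avoid that issue by assuming each trial has finitely many outcomes, which is the case in the application with the finite multiset $\J$, or by restricting throughout to outcomes of positive probability. Hoeffding's lemma and the Chernoff optimization are routine.
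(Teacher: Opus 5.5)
The paper gives no proof of this statement. It imports the inequality from Azuma's paper as a black box and uses it once, in \cref{lem:rounding}, so there is no internal argument to compare yours with. Judged on its own, your proposal is the standard Doob-martingale proof, and it is correct.

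The one step specific to this formulation is your Step 1, and you handle it properly. Writing $X_{i-1}$ as the conditional average over $t_i'$ of the values $\E[X\mid T_1=t_1,\dots,T_{i-1}=t_{i-1},T_i=t_i']$ and applying the hypothesis termwise gives $|X_i-X_{i-1}|\le c_i$, with no independence needed. From there, the conditional Hoeffding bound, the peeling of $D_n,\dots,D_1$, the choice $\lambda=t/\sum_i c_i^2$, and the same argument for $-X$ produce exactly the stated inequality.

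Your restriction to discrete trials costs nothing, since in the application the trials are independent draws from the finite multiset $\J$.

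As a side remark, the hypothesis gives more than $|D_i|\le c_i$. Conditional on $T_1,\dots,T_{i-1}$, the variable $X_i$ takes values in an interval of length at most $c_i$. The interval form of Hoeffding's lemma would then yield $\E[e^{\lambda D_i}\mid T_1,\dots,T_{i-1}]\le e^{\lambda^2c_i^2/8}$ and the sharper tail bound $2e^{-2t^2/\sum_i c_i^2}$. Your symmetric version gives up this factor of $4$ in the exponent, but it lands exactly on the inequality the paper needs.
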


Now we are ready to prove \cref{lem:rounding_intro}.
We restate it for the convenience of the readers.

\begin{lemma}\label{lem:rounding}
Let $r \geq 2$ and $d \geq 0$ be integers.
Let $G$ be a $d$-degenerate graph on $n$ vertices with $\chi_f(G) \leq r$.
Let $R=2r$, $m=\lceil n/r\rceil$ and $k=\left\lceil R\log\left(2\max\{1,d/R\}\right)\right\rceil$.
If $R^2 \geq 64km^2\log n$, then $\chi(G)\le R+k$.
\end{lemma}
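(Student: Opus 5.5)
Fix a degeneracy ordering $v_1,\dots,v_n$ of $G$, so that each $v_i$ has at most $d$ neighbours among earlier vertices. Call these earlier neighbours the back-neighbourhood $B(v)$. By \cref{lem:bounded}, there is a distribution on a multiset $\J$ of stable sets, each of size at most $m$, with $\Pr(v\in I)=1/R$ for every vertex $v$.

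We draw $k$ independent stable sets $I_1,\dots,I_k$ from this distribution. Each $I_j$ receives its own colour. Let $U=V(G)-\bigcup_j I_j$. Every vertex survives into $U$ with probability $(1-1/R)^k\le e^{-k/R}\le \min\{1,R/d\}/2$. Hence
\[
\E\bigl[|B(v)\cap U|\bigr]\le d\cdot \min\{1,R/d\}/2\le R/2 .
\]
If we can guarantee that every vertex of $U$ has fewer than $R$ back-neighbours in $U$, then $G[U]$ is $(R-1)$-degenerate in the inherited ordering. We can then colour it greedily with $R$ further colours, giving $\chi(G)\le R+k$. If $d<R$ we may skip all of this, since $d+1\le R$ colours suffice.

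The main step is concentration for $X_v=|B(v)\cap U|$ for each fixed $v$. Here we use \cref{azuma}, with the trials being the $k$ draws $T_j=I_j$. Changing one draw changes $X_v$ by at most $m$, because a single stable set removes at most $m$ vertices. We also need the conditional expectations to shift by at most $c_j\le m$. This holds because the draws are independent: the conditional expectation with $I_j$ fixed is an average over the other draws of a function that is $m$-Lipschitz in $I_j$. Thus $\sum_j c_j^2=km^2$. Azuma with deviation $t=R/2$ gives
\[
\Pr(X_v\ge R)\le 2\exp\!\left(-\frac{R^2}{8km^2}\right)\le 2\exp(-8\log n)=2n^{-8},
\]
using the hypothesis $R^2\ge 64km^2\log n$.

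A union bound over the $n$ vertices gives failure probability at most $2n^{-7}<1$ for $n\ge2$. The case $n=1$ is trivial. Hence some outcome has $X_v\le R-1$ for all $v$. For $v\in U$ this means $v$ has at most $R-1$ earlier neighbours in $U$, which is the degeneracy bound we needed.

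The obstacle I expect is purely bookkeeping. First, the bound $(1-1/R)^k\le R/(2d)$ must follow from the chosen $k=\lceil R\log(2\max\{1,d/R\})\rceil$, which it does via $1-x\le e^{-x}$. Second, the case $d\le R$ should be separated out so that the expectation is at most $R/2$ in both regimes.
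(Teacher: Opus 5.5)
Your proof is correct and follows essentially the same route as the paper: a degeneracy ordering, $k$ independent draws from \cref{lem:bounded}, the bound $\E X_v\le R/2$, Azuma over the $k$ draws, and a union bound over vertices. The one difference is that you use the valid, sharper Lipschitz constant $c_j=m$ where the paper uses $2m$, which gives failure probability $2n^{-7}$ instead of the paper's $2/n$ and lets you cover all $n\ge2$ rather than assuming $n\ge3$.
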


\begin{proof}
We may assume that $n\geq3$, since otherwise $\chi(G)\leq n\leq R+k$.
Since $G$ is $d$-degenerate, there exists a linear ordering $v_1,v_2,...,v_{|V(G)|}$ of $V(G)$ such that for every $1 \leq i \leq |V(G)|$, $v_i$ is adjacent to at most $d$ vertices in $\{v_j: 1 \leq j \leq i\}$; for every $1 \leq i \leq |V(G)|$, let $S_i = N_G(v_i) \cap \{v_j: 1 \leq j \leq i\}$.
For every $v \in V(G)$, let $S_v=S_i$, where $i$ is the index such that $v=v_i$.
Note that $|S_v| \leq d$ for every $v \in V(G)$.

Independently choose random stable sets $I_1,\ldots,I_k$ in $G$ according to the distribution given by \cref{lem:bounded}.
For each vertex $v$, let $Z_v=|S_v-(I_1\cup\cdots\cup I_k)|$.
By \cref{lem:bounded},
$$\E Z_v
 \le d\left(1-\frac1R\right)^k
 \le de^{-k/R}
 \le\frac R2.$$
Note that replacing one of the sampled stable sets changes $Z_v$ by at most $2m$.
For every $v \in V(G)$, by \cref{azuma} (with $t=R/2$ and $c_i=2m$ for all $i$), 
$$\Pr(Z_v\ge R) \leq \Pr(|Z_v-\E Z_v|\ge R/2) \leq 2e^{-\frac{R^2/4}{2k \cdot 4m^2}} \leq 2e^{-\frac{R^2}{32km^2}}.$$
Since $R^2 \geq 64km^2\log n$, we know $\Pr(Z_v\ge R) \leq 2/n^2$ for every $v \in V(G)$.
By the union bound, with positive probability, there exists a choice of $I_1,I_2,...,I_k$ such that $|S_v-(I_1\cup \cdots\cup I_k)| < R$ for every $v\in V(G)$. Since this quantity is an integer, $|S_v-(I_1\cup \cdots\cup I_k)| \leq R-1$ for every $v\in V(G)$.
Hence $G-(I_1\cup \cdots\cup I_k)$ is $(R-1)$-degenerate.
So $\chi(G-(I_1\cup \cdots\cup I_k)) \leq R$.
Therefore, $\chi(G) \leq R+k$.
\end{proof}

\section{Bounding the chromatic number} \label{sec:coloring_bound}

\begin{lemma}\label{lem:quantitative}
Let $r \geq 2$ be an integer, and let $G$ be a graph on $n$ vertices with $r \leq n \leq r^2$.
Let $\lambda=1+\log(n/r)$ and $m=\lceil n/r\rceil$.
If $r \ge 48m^2(1+\log\lambda)\log n$ and $\chi_f(H)\le r$ for every minor $H$ of $G$, then $$\chi(G)\le8r(1+\log\lambda).$$
\end{lemma}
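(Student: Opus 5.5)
The plan is to combine \cref{lem:rcd} and \cref{lem:rounding}, with the degeneracy threshold $d$ tuned so that both contributions are $O(r(1+\log\lambda))$.

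\emph{Step 1: get stable-set density for every minor.} Every minor $H$ of $G$ satisfies $\chi_f(H)\le r$. Taking all vertex weights equal to $1$ in the remark after the definition of fractional colorings, we get $\alpha(H)\ge |V(H)|/r$. So the hypothesis of \cref{lem:rcd} holds.

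\emph{Step 2: reduce the degeneracy.} Choose $d=\lceil n/\lambda\rceil$, or a similar value of order $n/\lambda$. \cref{lem:rcd} gives a $(d-1)$-degenerate minor $J$ with $\chi(G)\le\chi(J)+s$ and
\[
s\le 1+\frac{r^2+n}{d}\log^+\!\Big(\frac nd\Big).
\]
Here $n\le r^2$, so $r^2+n\le 2r^2$. Also $n/d\le\lambda$, so $\log^+(n/d)\le\log\lambda$. This gives $s\le 1+2r^2\lambda\log\lambda/n$. This is not yet $O(r\log\lambda)$, because $r^2/n$ can be as large as $r$. So I would instead take $d$ of order $r^2\lambda/n$ scaled appropriately, or more naturally $d=\lceil (r^2+n)/r\rceil\approx r+n/r$. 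Then $s\le 1+r\log^+(n/d)$, and since $n/d\le n/r$ we get $\log^+(n/d)\le \log\lambda$ roughly. So $s\le 1+r\log\lambda$, up to an additive $r$, which the final constant absorbs.

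\emph{Step 3: color $J$.} $J$ is a minor of $G$, so $\chi_f(J)\le r$. Also $|V(J)|=n'\le n$, so $m'=\lceil n'/r\rceil\le m$ and $\log n'\le\log n$. Apply \cref{lem:rounding} to $J$ with degeneracy $d-1\le d$ and $R=2r$. Then
\[
k=\lceil 2r\log(2\max\{1,d/(2r)\})\rceil .
\]
With $d\approx r+n/r\le r+r\lambda'$, we have $d/(2r)\le n/r^2+1\le 2$ in the regime $n\le r^2$. So $k\le 2r\log 4+1$, or at worst $k=O(r(1+\log\lambda))$; in either case $k\le 4r(1+\log\lambda)$. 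The condition $R^2\ge 64km'^2\log n'$ becomes $4r^2\ge 64km^2\log n$. Since $k\le 3r(1+\log\lambda)$, this follows from $r\ge 48m^2(1+\log\lambda)\log n$, which is exactly the hypothesis. The constant $48=64\cdot 3/4$ indicates the intended bound is $k\le 3r(1+\log\lambda)$. The lemma then gives $\chi(J)\le 2r+k$.

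\emph{Step 4: total, and the main obstacle.} Summing, $\chi(G)\le s+2r+k\le 1+r\log\lambda+2r+3r(1+\log\lambda)\le 8r(1+\log\lambda)$. The main obstacle is choosing $d$ so that the $\frac{r^2+n}{d}$ factor from \cref{lem:rcd} and the $\log(d/R)$ factor in $k$ balance. If $d\sim r$, then $s\sim r\log(n/r)$, which is too big when $\lambda$ is large. The resolution I would try first is $d\approx r\lambda$. Then $\frac{r^2+n}{d}\le \frac{2r}{\lambda}\cdot\frac{r}{r}$, using $n\le r^2$, hmm. This gives $s\lesssim (r+n/r)\lambda^{-1}\log(n/(r\lambda))$. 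I would need to check carefully that this is $O(r)$, using $n/r\le e^{\lambda}$, and that $\log(d/R)=\log(\lambda/2)\le\log\lambda$, so $k\le 2r(1+\log\lambda)+1\le 3r(1+\log\lambda)$. Verifying the first of these inequalities is the computation I expect to be delicate. If it fails, I would adjust $d$ to $r\lambda\cdot(n/r^2+1)$ and recheck both terms.
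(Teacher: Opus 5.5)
Your main line of argument, Steps 2--4 with $d=\lceil (r^2+n)/r\rceil\approx r+n/r$, contains a genuine error. From $n/d\le n/r$ you only get $\log^+(n/d)\le\log(n/r)=\lambda-1$, not $\log^+(n/d)\le\log\lambda$; you have conflated $\log(n/r)$ with $\log\lambda$.

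In fact this $d$ equals $r+m\le 2r$. So your own remark that $d\sim r$ gives $s\sim r\log(n/r)$ applies to it. The bound from \cref{lem:rcd} is then of order $r\lambda$, for example about $4r\log\log r$ when $n\approx r\log^4 r$, which is the regime where the lemma is used. That only recovers a Delcourt--Postle-type loss, not $O(r(1+\log\lambda))$. So the final summation in Step 4, which relies on $s\le 1+r\log\lambda$, is unsupported.

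The choice you mention at the end, $d=\lceil r\lambda\rceil$, is exactly the paper's. The computation you expect to be delicate is one line: $\frac{r^2+n}{d}\le\frac{2r^2}{r\lambda}$ and $\log^+(n/d)\le\log(n/r)\le\lambda$. The factors of $\lambda$ cancel, so $s\le 2r+1$.

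The point is to make $s$ of order $r$, not $r\log\lambda$. Raising $d$ to about $r\lambda$ costs only logarithmically in \cref{lem:rounding}: $(d-1)/(2r)<\lambda/2$ gives $k\le 2r\log(2\lambda)+1\le 3r(1+\log\lambda)$. The hypothesis then yields $64km^2\log n\le 4r^2$. Hence $\chi(G)\le (2r+1)+2r+3r(1+\log\lambda)\le 8r(1+\log\lambda)$.

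Your observation that $|V(J)|\le n$ keeps $m$ and $\log n$ under control when applying \cref{lem:rounding} to $J$ is correct, and slightly more explicit than the paper. But as written, the proposal commits to the wrong $d$ and leaves the right one unverified, so it is not yet a proof.
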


\begin{proof}
Let $d=\lceil r\lambda\rceil$.
For every minor $H$ of $G$, since $\chi_f(H)\le r$, we know $\alpha(H)\ge |V(H)|/r$.
By \cref{lem:rcd}, there exist an integer $s$ with $s\le1+\frac{r^2+n}{d}\logp(n/d)$ and a minor $J$ of $G$ such that $J$ is $(d-1)$-degenerate and $\chi(G)\le\chi(J)+s$.

Since $n \leq r^2$, $d\geq r\lambda \geq r$, and $\logp(n/d)\le\log(n/r)\le\lambda$, we have
\begin{equation}\label{eq:rounds}
 s\le1+\frac{2r^2}{r\lambda}\lambda\le2r+1.
\end{equation}

Let $R=2r$ and $$k=\left\lceil R\log\left(2\max\{1,(d-1)/R\}\right)\right\rceil.$$
Since $d-1<r\lambda$ and $\lambda\ge1$,
\begin{equation}\label{eq:rounds2}
k\le2r\log(2\lambda)+1
 \le3r(1+\log\lambda).
\end{equation}
So $$64km^2\log n \leq 192r(1+\log\lambda)m^2\log n \leq 4r^2=R^2,$$
where the last inequality uses the assumption $r\ge 48m^2(1+\log\lambda)\log n$ of this lemma.

Since $J$ is a minor of $G$, $\chi_f(J)\le r$ by assumption of this lemma.
Hence \cref{lem:rounding} implies that $\chi(J)\le R+k$.
Together with \eqref{eq:rounds} and \eqref{eq:rounds2},
$$\chi(G) \leq \chi(J)+s \leq R+k+s \leq 4r+1+3r(1+\log\lambda) \leq8r(1+\log\lambda),$$
as desired.
\end{proof}

Now we are ready to prove \cref{lem:small_intro}.
We restate it for the convenience of the readers.

\begin{lemma}\label{lem:small}
For any real numbers $a,b>0$, there exists an integer $r_0=r_0(a,b)$ such that the following holds.
Let $r\ge r_0$ be an integer, and let $G$ be a graph on $n \leq ar\log^b r$ vertices.
If $\chi_f(H)\le r$ for every minor $H$ of $G$, then $$\chi(G)\le8r\bigl(1+\log(1+\logp(n/r))\bigr).$$
\end{lemma}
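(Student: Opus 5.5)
The plan is to reduce to \cref{lem:quantitative} by splitting into cases according to the size of $n$ relative to $r$. For $r$ large enough in terms of $a,b$, we have $n\le ar\log^b r\le r^2$, so the upper bound $n\le r^2$ required there is automatic. Two cases remain: $n<r$ and $r\le n\le ar\log^b r$.

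\emph{Case $n<r$.} The claimed bound is trivial, because $\chi(G)\le n<r\le 8r\bigl(1+\log(1+\logp(n/r))\bigr)$.

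\emph{Case $n\ge r$.} Here $\logp(n/r)=\log(n/r)$, so $\lambda=1+\log(n/r)$ matches the quantity in the target bound. Applying \cref{lem:quantitative} then gives exactly $\chi(G)\le 8r(1+\log\lambda)$. Its hypothesis that $\chi_f(H)\le r$ for every minor $H$ of $G$ is assumed in the statement, and $r\ge2$ holds once $r_0\ge2$. It therefore remains only to verify the numerical condition
\[
r\ge 48m^2(1+\log\lambda)\log n .
\]

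This verification is the main (though routine) step, and it is where $r_0(a,b)$ is chosen. Each factor is small compared with $r$:
\begin{itemize}
\item $m=\lceil n/r\rceil\le a\log^b r+1$, so $m^2=O_{a,b}(\log^{2b} r)$.
\item $\lambda\le 1+\log(a\log^b r)=O_{a,b}(\log\log r)$, so $1+\log\lambda=O_{a,b}(\log\log\log r)$ for large $r$.
\item $\log n\le \log(ar\log^b r)=O_{a,b}(\log r)$.
\end{itemize}
Hence the right-hand side is $O_{a,b}\bigl(\log^{2b+1}r\cdot\log\log\log r\bigr)=o(r)$. Consequently there is an $r_0(a,b)$ such that for all $r\ge r_0$ both $ar\log^b r\le r^2$ and $48(a\log^b r+1)^2(1+\log(1+\log(a\log^b r)))\log(ar\log^b r)\le r$ hold.

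One bookkeeping point needs care: the bound $m$, $\lambda$, $\log n$ must be monotone in $n$, so that the worst case $n=ar\log^b r$ controls every $n$ in range. This holds because each of these quantities is nondecreasing in $n$. In particular, $1+\log\lambda$ is well defined and nonnegative since $\lambda\ge1$.
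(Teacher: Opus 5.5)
Your proposal is correct and follows the same route as the paper. You dispose of $n<r$ trivially, then for $n\ge r$ apply \cref{lem:quantitative} after checking that $n\le r^2$ and that $48m^2(1+\log\lambda)\log n=O_{a,b}\bigl(\log^{2b+1}r\cdot\log\log\log r\bigr)=o(r)$ once $r\ge r_0(a,b)$; your explicit monotonicity-in-$n$ remark just makes precise a step the paper leaves implicit.
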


\begin{proof}
We may assume $n \geq r$, for otherwise $\chi(G)\leq n<r$ and the lemma holds.
Let $\lambda=1+\log(n/r)$ and $m=\lceil n/r\rceil$.

Note that $m \leq 2a\log^b r$, and $1+\log\lambda \leq 1 + \log(1+\log(a\log^b r)) = 1+ \log(1+\log a +b\log\log r) \leq c_1(1+\log\log\log r)$ for some constant $c_1$ depending only on $a,b$, and $\log n \leq c_2 \log r$ for some constant $c_2$ dependent only on $a,b$.
Hence 
$$48m^2(1+\log\lambda)\log n
 \leq c_3 \left(\log^{2b+1} r
 (1+\log\log\log r)\right) = o(r)$$ for some $c_3$ dependent only on $a,b$.
 
Also $n\leq ar\log^b r \leq r^2$ for all $r \geq c_4$ for some constant $c_4$ depending only on $a,b$.
So there exists $r_0$ dependent only on $a,b$ such that the assumptions of \cref{lem:quantitative} hold for every $r\ge r_0$.
Since $1+\log\lambda=1+\log(1+\log(n/r))$, the conclusion of \cref{lem:quantitative} is exactly the desired bound.
\end{proof}

\section{Proof of \cref{thm:global}} \label{sec:pf_main}

Now we prove \cref{thm:global}.
The following is a restatement.

\begin{theorem}
There is an absolute constant $C$ such that, for every integer $t\geq16$
and every $K_t$-minor-free graph $G$,
$$\chi(G)\le Ct\log\log\log t.$$
\end{theorem}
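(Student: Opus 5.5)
We combine \cref{thm:dp} with \cref{lem:small}. By \cref{thm:dp}, $\chi(G)\le C_0t(1+f(G,t))$, so it suffices to show that $f(G,t)=O(\log\log\log t)$ for all sufficiently large $t$. Small $t$ can be absorbed into the constant $C$: for $t$ below any fixed threshold $t_1$ we have $\chi(G)=O(t\sqrt{\log t})=O(1)\cdot t$, and $\log\log\log t$ is bounded below by a positive constant for $t\ge16$.

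Fix an integer $a$ with $t/\sqrt{\log t}\le a\le t$ and a $K_a$-minor-free subgraph $H\subseteq G$ with $|V(H)|\le C_0a\log^4a$. We want to apply \cref{lem:small} to $H$ with $r=2(a-1)$. The two hypotheses hold as follows.
\begin{itemize}
\item Every minor $H'$ of $H$ is $K_a$-minor-free, so \cref{thm:rs} gives $\chi_f(H')\le 2(a-1)=r$.
\item Since $a\le r$, we have $|V(H)|\le C_0a\log^4a\le C_0 r\log^4 r$. So we may take $a'=C_0$ and $b=4$ as the constants in \cref{lem:small}, which fixes $r_0=r_0(C_0,4)$, an absolute constant.
\end{itemize}
Because $a\ge t/\sqrt{\log t}$, we get $r\ge r_0$ once $t$ exceeds an absolute threshold $t_1$.

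\cref{lem:small} then yields
\[
\chi(H)\le 8r\bigl(1+\log(1+\logp(|V(H)|/r))\bigr).
\]
Since $|V(H)|/r\le C_0\log^4 r$, the bracket is at most $1+\log(1+\log C_0+4\log\log r)=O(\log\log\log r)$ for large $r$. Dividing by $a$ and using $r\le 2a\le 2t$ gives $\chi(H)/a\le 16\cdot O(\log\log\log t)$. Taking the maximum over all admissible $a$ and $H$ gives $f(G,t)=O(\log\log\log t)$, and hence $\chi(G)\le C_0t(1+O(\log\log\log t))\le Ct\log\log\log t$.

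The main obstacle is uniformity: the constant $r_0$ and the threshold $t_1$ must not depend on $a$ or on $H$. This holds because $a'=C_0$ and $b=4$ are absolute, and $a$ is bounded below by a function of $t$ alone. The remaining work is routine bookkeeping with iterated logarithms, for example checking that $\log\log\log(2t)=\Theta(\log\log\log t)$ and that the additive $1$'s are absorbed for $t\ge16$.
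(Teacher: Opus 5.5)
Your proposal is correct and matches the paper's proof essentially step for step. You apply \cref{thm:dp}, and for each admissible $a$ and $H$ you apply \cref{lem:small} with $r=2(a-1)$ and constants $(C_0,4)$, using \cref{thm:rs} for the fractional-chromatic hypothesis and absorbing small $t$ into $C$. The only cosmetic difference is that you bound via $r\le 2t$ and $\log\log\log(2t)=\Theta(\log\log\log t)$, where the paper uses $\log\log\log a\le\log\log\log t$.
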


\begin{proof}
Let $C_0$ be the constant in \cref{thm:dp}.
It suffices to prove the theorem for all sufficiently large $t$, since the finitely many remaining values can be absorbed by increasing $C$.
Let $G$ be a $K_t$-minor-free graph, and let $f(G,t)$ be as defined in \cref{thm:dp}.

Let $a$ be an integer such that
$t/\sqrt{\log t}\le a\le t$, and let $H$ be a $K_a$-minor-free subgraph of $G$ on at most $C_0a\log^4a$ vertices.
Let $r=2(a-1)$.
Every minor $J$ of $H$ is $K_a$-minor-free, so \cref{thm:rs} implies $\chi_f(J)\le 2(a-1)=r$.
Moreover, for all $a \geq 3$, we have $|V(H)|\le C_0a\log^4a\le C_0r\log^4r$.

Let $r_0$ be the corresponding number in \cref{lem:small} (taking $a=C_0$ and $b=4$).
Since $a\ge t/\sqrt{\log t}$, we may assume $a \geq r_0$, for otherwise $t$ is upper bounded by an absolute constant and the theorem holds.
Hence $r \geq a \geq r_0$.
Then, since $r=2(a-1)$ and $|V(H)|\le C_0a\log^4a$, \cref{lem:small} implies that there is an absolute constant $C_1$ depending only on $C_0$ such that
$$
\chi(H) \leq 8r\bigl(1+\log(1+\logp(|V(H)|/r))\bigr) \leq C_1a\log\log\log a.
$$
Since $a\le t$,
$$
\frac{\chi(H)}a
\le C_1\log\log\log a
\le C_1\log\log\log t.
$$
Therefore,
$$ 
f(G,t)\le C_1\log\log\log t.
$$
By \cref{thm:dp}, $$ \chi(G)
 \le C_0t(1+f(G,t))
 \le Ct\log\log\log t$$
for a suitable absolute constant $C$.
This proves the theorem.
\end{proof}

\paragraph{Declaration of AI use.} ChatGPT Codex was used to help develop the proof. The authors supplied a high-level plan of attack and guided the development of the core argument. The authors independently verified and rewrote all AI-generated material in their own words. The authors take full responsibility for all mathematical claims in the paper.

\end{document}